\newtheorem{theorem}{Theorem}[section]
\newtheorem{lemma}[theorem]{Lemma}
\newtheorem{corollary}[theorem]{Corollary}
\theoremstyle{definition}
\newtheorem{definition}[theorem]{Definition}
\theoremstyle{remark}
\newtheorem{remark}[theorem]{Remark}
\numberwithin{equation}{section}
\begin{document}

\title{Brown-York mass and positive scalar curvature I \\ - First eigenvalue problem and its applications}

\author{Wei Yuan}
\address{Department of Mathematics, Sun Yat-sen University, Guangzhou, Guangdong 510275, China}
\email{gnr-x@163.com}

\subjclass[2000]{Primary 53C20; Secondary 53C21, 53C24}


\thanks{This work was supported by NSFC (Grant No. 11601531, No. 11521101) and the Fundamental Research Funds for the Central Universities (Grant No. 2016-34000-31610258).}

\keywords{scalar curvature, Brown-York mass, positive mass theorem, first eigenvalue, vacuum static space}

\begin{abstract}
In this article, we investigate the connection between scalar curvature and first eigenfunctions via positive mass theorem for Brown-York mass. For compact manifolds with nice boundary, we show that a sharp inequality holds for first eigenfunctions when posing appropriate assumptions on scalar curvature and first eigenvalue. This inequality implies that for a compact $n$-dimensional manifold with boundary, its first eigenvalue is no less that $n$, if its scalar curvature is at least $n(n-1)$ with appropriate boundary conditions posed, where equality holds if and only the manifold is isometric to the canonical upper hemisphere. As an application, we derive an estimate for the area of event horizon in a vacuum static space with positive cosmological constant, which reveals an interesting connection between the area of event horizon and Brown-York mass. This estimate generalizes a similar result of Shen for three dimensional vacuum static spaces and also improves the uniqueness result of de Sitter space-time due to Hizagi-Montiel-Raulot. 
\end{abstract}

\maketitle



\section{Introduction}

In Riemannian geometry, one of the most remarkable result is the \emph{Lichnerowicz-Obata Theorem}, which says that the first eigenvalue of Laplacian on a closed manifold $(M^n,g)$ satisfies 
$$\lambda_1(M, g) \geq n,$$
if its Ricci curvature is assumed to be
$$Ric_g \geq (n-1) g.$$
Moreover, equality holds if and only if $(M,g)$ is isometric to the canonical sphere $\mathbb{S}^n$.\\

Besides this, it is well-known that Ricci curvature can also control other geometric invariants like volume, diameter \emph{etc}. These results have many important applications throughout the study of geometric analysis.\\ 

An interesting question is, can we derive similar results by posing assumptions on scalar curvature instead of Ricci curvature? In general, one might provide a negative answer since scalar curvature is conceptually a weaker notion compared with Ricci curvature. \\

However, extensive researches on scalar curvature shows that the geometry of manifolds with respect to positive scalar curvature is more rigid than we thought. Especially, the celebrated \emph{Positive Mass Theorem} proved by Schoen-Yau and Witten made one take a reconsideration on the role of scalar curvature in the study of differential geometry (cf. \cite{S-Y_1, S-Y_2, Witten}).\\

The main purpose of this article is to reveal the connection between first eigenfunctions and scalar curvature via positive mass theorem. In another word, we try to explore applications of positive mass theorem on eigenvalue problems. In fact, when studying the uniqueness of special vacuum static spaces, people have already noticed that first eigenfunctions have a deep connection with positive mass theorem. These works originated from Bunting and Masood-ul-Alam when they studied the uniqueness of Schwarzschild space-time (see \cite{B-M}). In their article, they found that the lapse function, which is a special harmonic function, can be used as a bridge between the original vacuum static space and the Euclidean space through a special conformal transformation. Similar ideas was also used in the work of Hizagi-Montiel-Raulot (\cite{H-M-R}) and Qing (\cite{Qing}) when studying the uniqueness of de Sitter/anti de Sitter space-time respectively. Notice that all these special lapse functions are in fact first eigenfunctions, we have reasons to believe that this should be a universal phenomenon instead of something rare. \\ 

In order to achieve what we expected, recall the \emph{positive mass theorem for Brown-York mass} first. This is the key in proving our central result.
\begin{theorem}[Shi and Tam \cite{S-T}]\label{thm:Shi-Tam}
For $n\geq 3$, suppose $(M^n, g)$ is a compact manifold with boundary $\Sigma:= \bigcup_{i=1}^m \Sigma_i$, where each $(\Sigma_i, g|_{\Sigma_i})$ is a connected component which can be embedded in $\mathbb{R}^n$ as a convex hypersurface.
Assume $3 \leq n \leq 7$ or $M$ is spin, moreover its scalar curvature
$$R_g \geq 0$$
and mean curvature of $\Sigma_i$ with respect to $g$ satisfies
$$H_g^i > 0$$ on $\Sigma_i$,
then the Brown-York mass
\begin{align}
m_{BY}(\Sigma_i, g) := \int_{\Sigma_i} \left( H_0^i - H_g^i \right) d\sigma \geq 0, \ \ \ i=1, \cdots, m
\end{align}
where $H_0^i$ is the mean curvature of $\Sigma_i$ with respect to the Euclidean metric. Moreover, if one of the mass vanishes then $(M,g)$ is isometric to a bounded domain in $\mathbb{R}^n$.
\end{theorem}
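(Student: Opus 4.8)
The plan is to follow the strategy of Shi and Tam: for each boundary component $\Sigma_i$ one constructs an asymptotically flat, scalar-flat Riemannian manifold $M_{\mathrm{ext}}^i$ lying ``outside'' $\Sigma_i$, glues it to $(M,g)$ along $\Sigma_i$, and then applies the Riemannian positive mass theorem of Schoen--Yau (valid for $3\le n\le 7$) or Witten (valid in the spin case) to the resulting complete manifold $\widehat M$. The inequality $m_{BY}(\Sigma_i,g)\ge 0$ then comes out of the nonnegativity of the ADM mass of $\widehat M$ together with a monotonicity formula along a canonical foliation of the exterior.

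Concretely, fix $i$ and use the hypothesis to isometrically embed $(\Sigma_i,g|_{\Sigma_i})$ into $\mathbb{R}^n$ as a convex hypersurface $\Sigma_i^0$, whose Euclidean mean curvature $H_0^i$ is positive. Let $\{\Sigma_i^r\}_{r\ge 0}$ be the foliation of the unbounded component of $\mathbb{R}^n\setminus\Sigma_i^0$ by level sets of the distance to $\Sigma_i^0$, so that the Euclidean metric there is $dr^2+\gamma_r$ with $\gamma_r$ the induced metric on $\Sigma_i^r$. On this region consider metrics of the form
\[
 \bar g = u^2\, dr^2 + \gamma_r,\qquad u=u(r,x)>0,
\]
to be determined by requiring $R_{\bar g}\equiv 0$; this condition becomes a quasilinear equation for $u$ that is parabolic in the ``time'' variable $r$, schematically $H_0(r)\,\partial_r u = u^2\,\Delta_{\gamma_r}u + (\text{lower order in }u)$, where $H_0(r)>0$ is the Euclidean mean curvature of $\Sigma_i^r$. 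One prescribes the initial data
\[
 u\big|_{r=0} = \frac{H_0^i}{H_g^i},
\]
which is smooth and positive precisely because $H_g^i>0$; a direct computation shows that with this choice the mean curvature of $\Sigma_i^0$ computed in $\bar g$ equals exactly $H_g^i$. One then needs long-time existence for this parabolic problem, together with the asymptotics $u\to 1$ as $r\to\infty$ and the leaves $\Sigma_i^r$ approaching round spheres of radius comparable to $r$, so that $\bar g$ is asymptotically flat; convexity of $\Sigma_i^0$ is used here both to control the geometry of the leaves and to keep the equation uniformly parabolic.

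Next form the complete manifold $\widehat M = M\cup_{\Sigma} M_{\mathrm{ext}}$, where $M_{\mathrm{ext}}=\bigsqcup_i M_{\mathrm{ext}}^i$. Its scalar curvature is $R_g\ge 0$ on $M$ and $0$ on $M_{\mathrm{ext}}$; the metric is only Lipschitz across $\Sigma$, but the induced metrics and the mean curvatures match there, so the distributional scalar curvature of $\widehat M$ is nonnegative and the positive mass theorem applies after a standard smoothing of the corner, giving $m_{\mathrm{ADM}}(\widehat M)\ge 0$ with equality only for flat $\mathbb{R}^n$. The last ingredient is the monotonicity along the foliation: setting
\[
 \mu_i(r) = \int_{\Sigma_i^r}\left(H_0(r) - H_{\bar g}(r)\right)d\sigma_{\gamma_r},
\]
one checks, using $R_{\bar g}\equiv 0$ and integration by parts on the leaves, that $\mu_i$ is non-increasing in $r$, that $\lim_{r\to\infty}\mu_i(r)$ is a positive multiple of the ADM mass of the $i$-th end, and that $\mu_i(0)=m_{BY}(\Sigma_i,g)$. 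Hence $m_{BY}(\Sigma_i,g)=\mu_i(0)\ge\lim_{r\to\infty}\mu_i(r)\ge 0$. For the rigidity statement, if $m_{BY}(\Sigma_j,g)=0$ for some $j$ then, each $\mu_i$ being nonnegative and non-increasing, $m_{\mathrm{ADM}}(\widehat M)=0$; the equality case of the positive mass theorem forces $\widehat M$ to be flat $\mathbb{R}^n$ (in particular $\Sigma$ must be connected), whence $M$ is isometric to a bounded domain in $\mathbb{R}^n$.

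I expect the two main obstacles to be the following. First, the long-time existence and sharp asymptotics of the quasilinear parabolic equation defining $u$: one must prevent the flow from degenerating and show that the leaves $\Sigma_i^r$ round up at the correct rate, so that $\bar g$ is genuinely asymptotically flat and its ADM mass is finite and equals $\lim_{r\to\infty}\mu_i(r)$ up to the expected constant; this step relies essentially on convexity. Second, the corner analysis at $\Sigma$: one has to justify that the merely Lipschitz glued metric still carries a positive mass theorem, either by smoothing while keeping the scalar curvature nonnegative (which is possible precisely because the mean curvatures match) or by appealing to the corner version of the positive mass theorem directly.
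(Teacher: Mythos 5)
This theorem is quoted in the paper from Shi--Tam \cite{S-T} without proof, so there is no internal argument to compare against; your proposal is in substance a faithful outline of Shi and Tam's original proof (quasi-spherical exterior metric $\bar g = u^2\,dr^2+\gamma_r$ with $u|_{r=0}=H_0^i/H_g^i$, gluing across the matched mean curvatures, monotonicity of $\int_{\Sigma_i^r}(H_0-H_{\bar g})\,d\sigma$ toward a multiple of the ADM mass of each end, and the positive mass theorem with a Lipschitz corner). The two obstacles you single out --- long-time existence and asymptotics of the parabolic equation for $u$, and the corner/smoothing version of the positive mass theorem including its rigidity case --- are exactly the technical content of the cited reference, so your sketch takes essentially the same route as the source.
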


\begin{remark}
As claimed in \cite{Lohkamp_1, Lohkamp_2, S-Y_3} by Lohkamp and Schoen-Yau independently, for $n \geq 8$, the positive mass theorem for ADM mass is still valid. This would imply that assumptions on dimensions and spin structure in Theorem \ref{thm:Shi-Tam} can be removed, since it is equivalent to the positive mass of ADM mass. Thus, whenever applying Theorem \ref{thm:Shi-Tam} in this article, we always mean this enhanced version without explicitly mentioning it.
\end{remark}

\begin{remark}
	In fact, the isometrical embedding assumptions can be replaced by assumptions on secional curvatures, for instance Gaussian curvature of $\Sigma$ is positive when $n=3$. For the purpose of maintaining the simplicity of assamptions, we will still use this isometrical embedding assumption for our results throughout the whole article to avoid involving more complicated assumptions.  
\end{remark}

In exploring the relation between first eigenfunctions and scalar curvature, we discovered a set of constants defined for boundary components plays a crucial role in understanding it.

\begin{definition}
Let $\phi$ be the first eigenfunction
$$
\left\{  \aligned \Delta_g \phi + \lambda_1 \phi = 0, & \textit{\ \ \ on $M$}\\ 
\phi=0, & \textit{\ \ \ on $\Sigma$} \\ 
\endaligned\right.
$$
which is normalized such that
\begin{align}
\max_M \left( \frac{\lambda_1}{n}\phi^2 + |\nabla \phi|^2 \right) = 1.
\end{align}
We define constants
\begin{align}
\eta_g^i : = \min_{\Sigma_i}|\nabla \phi| \leq 1,
\end{align}
where $\Sigma_i$ is a connected component of $\partial M$.
\end{definition}

\begin{remark}
Since the space of first eigenfunctions is of dimension one, constants $\eta_g^i$ defined above in fact only depend on the manifold itself. In particular, we can see $\eta_g = 1$ for the canonical upper hemisphere $\mathbb{S}^n_+$, since $\lambda_1 = n$ and the quantity $\phi^2 + |\nabla \phi|^2 = 1$ constantly. 
\end{remark}

When trying to apply Theorem \ref{thm:Shi-Tam}, one always needs to assure that those geometric assumptions for boundary components are satisfied. For simplicity, we summarize them into the following regularity notion about hypersurfaces in manifold. Here we adapt the mean curvature condition for our purpose, since we won't apply Theorem \ref{thm:Shi-Tam} directly.
\begin{definition}
Let $\Sigma$ be a connected hypersurface in $(M, g)$. We say $\Sigma$ is \textbf{$\eta_g$-regular}, if it satisfies the following two assumptions:

\textbf{(A1)}: $(\Sigma, g|_{\Sigma})$ can be embedded in $\mathbb{R}^n$ as a convex hypersurface; 

\textbf{(A2)}: the mean curvature with respect to $g$ satisfies
\begin{align*}
H_g > - (n-1) \eta_g
\end{align*}
on $\Sigma$.
\end{definition}
Note that, (A2) will be satisfied automatically, if the mean curvature $H_g$ is non-negative, since $\eta_g$ is a constant between $0$ and $1$.\\

For a $\eta_g$-regular hypersurface $\Sigma$, we can formally define its \emph{Brown-York mass} to be
\begin{align}
m_{BY}(\Sigma, g):= \int_\Sigma (H_0 - H_g) d\sigma_g,
\end{align}
where $H_0$ is the mean curvature of $\Sigma$ when embedded in $\mathbb{R}^n$.\\

Our main result is the following inequality, which provides a characterization of interplays among scalar curvature, eigenfunctions and Brown-York mass:
\newtheorem*{thm_A}{\bf Theorem A}
\begin{thm_A}\label{thm:Dirichlet_eigenvalue_est}
For $n\geq 3$, let $(M^n, g)$ be an $n$-dimensional compact Riemannian manifold with boundary $\partial M = \bigcup_{i=1}^m \Sigma_i$, where each $\Sigma_i$ is a $\eta_g^i$-regular component. Suppose its scalar curvature satisfies 
\begin{align}
R_g \geq n(n-1)
\end{align}
and the first Dirichlet eigenvalue
$$\lambda_1(M,g) \leq n,$$ then we have inequalities
\begin{align}
||\partial_\nu \varphi||_{L^1(\Sigma_i, g)} \leq \frac{m_{BY} (\Sigma_i, g) }{n-1} \left( \max_M \left( \frac{\lambda_1}{n}\varphi^2 + |\nabla \varphi|^2\right)\right)^{\frac{1}{2}}, \ \ \ i = 1, \cdots, m
\end{align}
where $\nu  = - \frac{\nabla \varphi}{|\nabla \varphi|}$ is the outward normal of $\Sigma$ with respect to $g$. Moreover, equality holds for some $i_0$ if and only if $(M, g)$ is isometric to the canonical upper hemisphere $\mathbb{S}^n_+$.
\end{thm_A}

Immediately, we get an estimate for the first Dirichlet eigenvalue with respect to comparison of scalar curvature:
\newtheorem*{cor_A}{\bf Corollary A}
\begin{cor_A}\label{thm:Dirichlet_eigenvalue_est}
For $n\geq 3$, let $(M^n, g)$ be an $n$-dimensional compact Riemannian manifold with boundary $\partial M = \bigcup_{i=1}^m \Sigma_i$, where each $\Sigma_i$ is a $\eta_g^i$-regular component. Suppose its scalar curvature satisfies
\begin{align}
R_g \geq n(n-1)
\end{align}
 and there exists a component $\Sigma_{i_0}$ satisfies
\begin{align}
m_{BY}(\Sigma_{i_0}, g)  \leq (n-1) \eta^{i_0}_g |\Sigma_{i_0}|,
\end{align}
where $|\Sigma_{i_0}|$ is the $(n-1)$-dimensional measure of $\Sigma_{i_0}$.
Then we have the estimate for the first Dirichlet eigenvalue:
\begin{align}
\lambda_1 (M, g)\geq n,
\end{align}
where equality holds if and only if $(M, g)$ is isometric to the canonical upper hemisphere $\mathbb{S}^n_+$.
\end{cor_A}

As a special case, we have
\newtheorem*{cor_B}{\bf Corollary B}
\begin{cor_B}\label{thm:Dirichlet_eigenvalue_est}
For $n\geq 3$, let $(M^n, g)$ be an $n$-dimensional Riemannian manifold with boundary $\Sigma$. Suppose its scalar curvature satisfies
\begin{align}
R_g \geq n(n-1)
\end{align}
and its boundary $\Sigma$ is isometric to canonical sphere $\mathbb{S}^{n-1}$ and mean curvature satisfies that
\begin{align}
H_g \geq  (n-1) (1 -\eta_g).
\end{align}
Then we have the estimate for the first Dirichlet eigenvalue:
\begin{align}
\lambda_1 (M, g)\geq n,
\end{align}
where equality holds if and only if $(M, g)$ is isometric to the canonical upper hemisphere $\mathbb{S}^n_+$.
\end{cor_B}

\begin{remark}
By assuming $Ric_g \geq (n-1)g$, Reilly showed that $\lambda_1 \geq n$, if the boundary has nonnegative mean curvature (see \cite{Reilly}). Compared it with Corollary B, we require stronger assumptions on the boundary to compensate the weakness of scalar curvature.
\end{remark}

For a slightly more general result, please see Corollary \ref{cor:Dirichlet_eigenvalue_est_integral_mean_curvature}.\\

For another application, we show that Theorem A can help us to get an estimate for the event horizon in a vacuum static space with positive cosmological constant. \\

A \emph{vacuum static space} is a triple $(M^n, g, u)$, where $(M, g)$ is a Riemannian manifold and $u$ is a smooth nonnegative function which solves the \emph{vacuum static equation}
\begin{align}
\nabla^2 u - g \Delta_g u - u Ric_g = 0.
\end{align}
The set $\Sigma:= \{x \in M: u(x) = 0 \}$ is called the \emph{event horizon}, which can be easily shown to be a totally geodesic hypersurface. The reason we call it "vacuum static" is because the corresponding Lorentzian metric $\hat g = - u^2 dt^2 + g$ is a static solution to the \emph{vacuum Einstein equation} with cosmological constant $\Lambda$:
\begin{align}
Ric_{\hat g} - \frac{1}{2} R_{\hat g} \hat g + \Lambda \hat g = 0.
\end{align}
Note that, in this case the cosmological constant is given by $\Lambda = \frac{R_g}{2}$, thus in particular $\Lambda$ has the same sign with the scalar curvature of $(M, g)$. There are many interesting geometric features associates to vacuum static spaces, for more informations about vacuum static spaces, please refer to \cite{Q-Y_1, Q-Y_2}.\\

For three dimensional vacuum static spaces, Shen showed in \cite{Shen} that at least there is one of the components of the event horizon has area at most $4 \pi$, provided we normalize the scalar curvature to be $R_g = 6$. This result was improved by Ambrozio recently (see \cite{Ambrozio}). The idea of Shen's proof is based on an integral identity, which relates some positive terms inside the manifold with Euler characteristic on the boundary through Gauss-Bonnet formula. If one intends to estimate the area of each boundary component, suppose there are more than one, this way seems not working.\\

However, with the aid of Theorem A, it is possible for us to estimate the area of each component of the event horizon, provided these components are not too bad. This gives a different interpretation of the relation between the area of the event horizons and vacuum static space-times from the viewpoint of Brown-York mass.

\newtheorem*{thm_B}{\bf Theorem B}
\begin{thm_B}\label{thm:horizon_area_est}
For $n\geq 3$, let $(M^n, g, u)$ be an $n$-dimensional vacuum static space with scalar curvature $R_g = n(n-1)$ and event horizon $\Sigma = \bigcup_{i=1}^m \Sigma_i$, where each $\Sigma_i$ is a connected component. 

Suppose each $(\Sigma_i, g|_{\Sigma_i})$ can be isometrically embedded in $\mathbb{R}^n$ as a convex hypersurface, then we have
\begin{align}\label{ineq:horizon_area_est}
Area(\Sigma_i, g) \leq \frac{\max_i \kappa_i^2}{(n-1)(n-2) \kappa_i^2}\int_{\Sigma_i} \left( R_{\Sigma_i} + |\overset{\circ}{\overline A_i}|^2 \right)  d\sigma_g,
\end{align}
where $\kappa_i:= |\nabla u|_{\Sigma_i}$ is called the \emph{surface gravity of $\Sigma_i$}, $R_{\Sigma_i}$ is the intrinsic scalar curvature and $\overset{\circ}{\overline A_i}$ is the traceless second fundamental form of $\Sigma_i$ when embedded in $\mathbb{R}^n$. Moreover, equality holds for some $i_0$ if and only if $(M, g)$ is isometric to the canonical upper hemisphere $\mathbb{S}^n_+$, \emph{i.e.} a spatial slice of de Sitter space-time.
\end{thm_B}

\begin{remark}
	In general, the event horizon $\Sigma$ is not connected. There are many examples shows that $\Sigma$ has two connected components. The simplest one is the Nariai space $[0, \sqrt{3}\pi] \times \mathbb{S}^2(\frac{1}{\sqrt{3}})$. For more details about these examples, please see \cite{Q-Y_1}.
\end{remark}

\begin{remark}
Suppose the event horizon is connected and isometric to the canonical sphere $\mathbb{S}^{n-1}$, then equality holds in the inequality (\ref{ineq:horizon_area_est}) and hence rigidity holds. This recovers the uniqueness of de Sitter space-time in vacuum static space-time by Hizagi-Montiel-Raulot immediately (see \cite{H-M-R}).
\end{remark}

\begin{remark}
In fact, Shen's argument in \cite{Shen} can be easily generalized to higher dimensions. From this, under the assumption that $\Sigma$ is connected, the estimate (\ref{ineq:horizon_area_est}) can be improved to be
\begin{align}
Area(\Sigma_i, g) \leq \frac{1}{(n-1)(n-2)}\int_{\Sigma_i} R_{\Sigma_i}  d\sigma_g.
\end{align}
In particular, the right hand side equals $4\pi$ when $n=3$ due to Gauss-Bonnet formula and this is Shen's original result. 
\end{remark}

\paragraph{\textbf{Acknowledgement}}
The author would like to express his appreciations to Professor Chen Bing-Long, Dr. Fang Yi, Professor Huang Xian-Tao, Professor Qing Jie and Professor Zhang Hui-Chun for their inspiring discussions and comments. 


\section{A sharp inequality for first eigenfunctions}

Suppose $\varphi \in C^\infty(M)$ is an eigenfunction associated to the first Dirichlet eigenvalue $\lambda_1 > 0$. That is, $\varphi$ satisfies
\begin{align*}
\left\{ \aligned  \Delta_g \varphi + \lambda_1 \varphi &= 0, &\textit{on $M$}\\ 
 \varphi &= 0, &\textit{on $\Sigma$} .
\endaligned\right.
\end{align*}

Let
$$u:= \left(1 +  \alpha \varphi\right)^{-\frac{n-2}{2}} > 0,$$
where
$$\alpha :=\left( \max_M \left( \frac{\lambda_1}{n}\varphi^2 + |\nabla \varphi|^2\right)\right)^{-\frac{1}{2}}.$$ We consider a conformal metric $\hat g := u^{\frac{4}{n-2}} g$ with respect to the metric $g$. It has these following properties:

\begin{lemma}\label{lem:conformal_scalar_curvature}
Suppose the scalar curvature
$$R_g \geq n(n-1) $$
and the first eigenvalue
$$\lambda_1(M, g) \leq n,$$ 
then the scalar curvature of the conformal metric $\hat g$ satisfies that
$$R_{\hat g} \geq 0.$$
\end{lemma}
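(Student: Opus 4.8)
The plan is to invoke the conformal transformation law for scalar curvature and reduce the assertion to an elementary pointwise inequality implied by the curvature/eigenvalue hypotheses together with the choice of $\alpha$. Recall that for a conformal change $\hat g = u^{\frac{4}{n-2}}g$ on an $n$-manifold,
\[
R_{\hat g} = u^{-\frac{n+2}{n-2}}\left(-\frac{4(n-1)}{n-2}\Delta_g u + R_g\,u\right),
\]
so, as $u>0$, it suffices to show $L_g u := -\frac{4(n-1)}{n-2}\Delta_g u + R_g u\ge 0$ on $M$. Since $\varphi$ is a first Dirichlet eigenfunction it has a fixed sign, which we take to be $\varphi\ge 0$; this ensures $w:=1+\alpha\varphi\ge 1>0$, so that $u=w^{-\frac{n-2}{2}}>0$ is well defined.

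Next I would compute $L_g u$ in terms of $\varphi$. Differentiating $u=w^{-\frac{n-2}{2}}$ twice and using $\nabla w=\alpha\nabla\varphi$ and $\Delta_g\varphi=-\lambda_1\varphi$ gives
\[
\Delta_g u=\frac{n(n-2)}{4}\,\alpha^2\,w^{-\frac{n+2}{2}}|\nabla\varphi|^2+\frac{n-2}{2}\,\alpha\lambda_1\varphi\,w^{-\frac n2},
\]
and hence
\[
L_g u=w^{-\frac n2-1}\,Q,\qquad Q:=R_g\,w^2-2(n-1)\lambda_1\,\alpha\varphi\,w-n(n-1)\,\alpha^2|\nabla\varphi|^2 .
\]
Thus the lemma reduces to the pointwise inequality $Q\ge 0$.

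To establish $Q\ge 0$, expand $w=1+\alpha\varphi$ and regroup:
\[
Q=R_g+2\alpha\varphi\big(R_g-(n-1)\lambda_1\big)+\alpha^2\varphi^2\big(R_g-2(n-1)\lambda_1\big)-n(n-1)\alpha^2|\nabla\varphi|^2 .
\]
The coefficient $R_g-2(n-1)\lambda_1$ need not be nonnegative, so the decisive step is to absorb the gradient term via the normalization $\frac{\lambda_1}{n}\varphi^2+|\nabla\varphi|^2\le\alpha^{-2}$, i.e. $-n(n-1)\alpha^2|\nabla\varphi|^2\ge-n(n-1)+(n-1)\lambda_1\alpha^2\varphi^2$; this gives
\[
Q\ge\big(R_g-n(n-1)\big)+2\alpha\varphi\big(R_g-(n-1)\lambda_1\big)+\alpha^2\varphi^2\big(R_g-(n-1)\lambda_1\big).
\]
Each summand is nonnegative: $R_g-n(n-1)\ge 0$ by hypothesis; $R_g-(n-1)\lambda_1\ge(n-1)(n-\lambda_1)\ge 0$ because $R_g\ge n(n-1)$ and $\lambda_1\le n$; and $\alpha\varphi\ge 0$. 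Hence $Q\ge 0$, so $L_g u\ge 0$ and $R_{\hat g}\ge 0$.

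There is no real obstacle in this argument; the points requiring attention are the bookkeeping in the conformal computation, the sign normalization $\varphi\ge 0$, and --- crucially --- using the normalization in exactly the form $\alpha^2|\nabla\varphi|^2\le 1-\frac{\lambda_1}{n}\alpha^2\varphi^2$, which is what converts the possibly-negative coefficient $R_g-2(n-1)\lambda_1$ into the manifestly nonnegative $R_g-(n-1)\lambda_1$. For the rigidity part of Theorem A it is worth noting already here that $R_{\hat g}\equiv 0$ forces equality throughout, whence $R_g\equiv n(n-1)$, $\lambda_1=n$, and $\frac{\lambda_1}{n}\varphi^2+|\nabla\varphi|^2$ is constant on $M$.
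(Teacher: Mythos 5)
Your proposal is correct and follows essentially the same route as the paper: the conformal transformation law for scalar curvature, the computation of $\Delta_g u$ via $\Delta_g\varphi=-\lambda_1\varphi$, and a pointwise algebraic estimate that combines $R_g\ge n(n-1)$, $\lambda_1\le n$, the sign $\varphi\ge 0$, and the normalization $\alpha^2\bigl(\frac{\lambda_1}{n}\varphi^2+|\nabla\varphi|^2\bigr)\le 1$. The only difference is bookkeeping (you keep $R_g$ general and regroup in powers of $\alpha\varphi$, while the paper substitutes $R_g\ge n(n-1)$ first), and you make explicit the positivity of the first eigenfunction that the paper uses implicitly.
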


\begin{proof}
Denote $\psi:= \alpha \varphi$. From the well-known conformal transformation law of scalar curvature, we have
$$R_{\hat g} = u^{- \frac{n+2}{n-2}} \left( R_g u - \frac{4(n-1)}{(n-2)} \Delta_g u\right).$$
Since
\begin{align*}
\Delta_g (1 +   \psi)^{- \frac{n-2}{2}} =& - \frac{n-2}{2}   (1 +   \psi)^{- \frac{n}{2}} \Delta_g \psi + \frac{n(n-2)}{4}    (1 +   \psi)^{- \frac{n+2}{2}} |\nabla \psi|^2\\
=& \frac{n-2}{2}  \lambda_1 (1 +   \psi)^{- \frac{n}{2}} \psi + \frac{n(n-2)}{4}   (1 +   \psi)^{- \frac{n+2}{2}} |\nabla \psi|^2\\
=&\frac{n(n-2)}{4}   (1 +   \psi)^{- \frac{n+2}{2}} \left( \frac{2}{n} \lambda_1 (1 +   \psi) \psi +  |\nabla \psi|^2 \right),
\end{align*}
then
\begin{align*}
R_{\hat g} = &  (1 +   \psi)^{ \frac{n+2}{2}}\left( R_g \left(1 +   \psi\right)^{-\frac{n-2}{2}} - n(n-1)   (1 +  \psi)^{- \frac{n+2}{2}} \left( \frac{2}{n}\lambda_1 (1 +  \psi) \psi +  |\nabla \psi|^2 \right)\right)\\
=&  R_g \left(1 +   \psi\right)^2 - n(n-1)   \left( \frac{2}{n}\lambda_1 (1 +  \psi) \psi +  |\nabla \psi|^2 \right)\\
\geq& n(n-1) \left(\left(1 +   \psi\right)^2 -  \frac{2}{n}\lambda_1   (1 +   \psi) \psi -  |\nabla \psi|^2 \right).
\end{align*}

Note that
\begin{align*}
&\left(1 +   \psi\right)^2 -  \frac{2}{n}\lambda_1   (1 +   \psi) \psi \\
=& 1 - \frac{\lambda_1}{n} \left(  2(1 +   \psi) \psi - \frac{n}{\lambda_1} \left( \psi^2 + 2 \psi\right)\right) \\
\geq&  1 - \frac{\lambda_1}{n} \left(  2(1 +   \psi) \psi - \left( \psi^2 + 2 \psi\right)\right) \\
=& 1- \frac{\lambda_1}{n} \psi^2,
\end{align*}
since we assume that $\lambda_1 \leq n$.

Therefore,
\begin{align*}
R_{\hat g} \geq& n(n-1) \left( 1 - \left( \frac{\lambda_1}{n} \psi^2 + |\nabla \psi|^2\right)\right)= n(n-1) \left( 1 - \alpha^2\left( \frac{\lambda_1}{n} \varphi^2 + |\nabla \varphi|^2\right)\right) \geq 0.
\end{align*}
\end{proof}

As for the boundary, we have
\begin{lemma}\label{lem:conformal_boundary}
Let $\Sigma_i$ be a component of the boundary $\Sigma := \bigcup_{i=1}^m \Sigma_i$. Suppose $(\Sigma_i, g|_{\Sigma_i})$ is $\eta_g^i$-regular, then the mean curvature of $\Sigma_i$ with respect to the conformal metric $\hat g$ is positive and the Brown-York mass $$m_{BY} (\Sigma_i, \hat g) = m_{BY} (\Sigma_i, g) - (n-1) \alpha ||\partial_\nu \varphi||_{L^1(\Sigma_i, g)},$$
where $\nu = - \frac{\nabla \varphi}{|\nabla \varphi|}$ is the outward normal of $\Sigma_i$ with respect to the metric $g$.
\end{lemma}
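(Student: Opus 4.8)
The plan is to exploit the fact that the conformal factor is trivial along the boundary. Since $\varphi$ vanishes on $\Sigma$, we have $u = (1+\alpha\varphi)^{-\frac{n-2}{2}} \equiv 1$ on each $\Sigma_i$, so $g$ and $\hat g$ induce the same Riemannian metric on $\Sigma_i$. Consequently $(\Sigma_i, \hat g|_{\Sigma_i})$ still satisfies (A1) — it admits the same isometric embedding into $\mathbb{R}^n$ as a convex hypersurface, so $H_0$ is unchanged — and moreover $d\sigma_{\hat g} = d\sigma_g$ along $\Sigma_i$. Everything therefore reduces to computing $H_{\hat g}$ in terms of $H_g$.

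For this I would invoke the conformal transformation law for the mean curvature of a hypersurface. Writing $\hat g = e^{2f} g$, so that $e^{2f} = u^{\frac{4}{n-2}} = (1+\alpha\varphi)^{-2}$, i.e.\ $f = -\log(1+\alpha\varphi)$, one has $H_{\hat g} = e^{-f}\big(H_g + (n-1)\,\partial_\nu f\big)$ with $\nu$ the outward $g$-unit normal and $n-1 = \dim\Sigma_i$ (the $+$ sign being the correct one for the convention under which convex hypersurfaces have positive mean curvature with respect to the outward normal, consistent with Theorem \ref{thm:Shi-Tam}). Since $\varphi$ may be taken positive in the interior and vanishes on $\Sigma$, Hopf's lemma gives $|\nabla\varphi| > 0$ on $\Sigma$ with $\nabla\varphi$ pointing inward, so $\nu = -\nabla\varphi/|\nabla\varphi|$ is indeed the outward normal and $\partial_\nu\varphi = -|\nabla\varphi|$. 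Evaluating on $\Sigma_i$, where $f = 0$ (hence $e^{-f} = 1$) and $\partial_\nu f = -\alpha\,\partial_\nu\varphi = \alpha|\nabla\varphi|$, yields
\begin{align*}
H_{\hat g} = H_g + (n-1)\,\alpha\,|\nabla\varphi| \qquad \text{on } \Sigma_i .
\end{align*}
Now $\psi := \alpha\varphi$ is precisely the normalized first eigenfunction entering the definition of $\eta_g^i$ (the normalization $\max_M(\tfrac{\lambda_1}{n}\psi^2 + |\nabla\psi|^2) = 1$ holds by the choice of $\alpha$), so $\alpha|\nabla\varphi| = |\nabla\psi| \ge \eta_g^i$ on $\Sigma_i$; combined with the $\eta_g^i$-regularity assumption (A2), namely $H_g > -(n-1)\eta_g^i$, this gives $H_{\hat g} \ge H_g + (n-1)\eta_g^i > 0$ on $\Sigma_i$, which is the asserted positivity.

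Finally I would integrate over $\Sigma_i$. Using $d\sigma_{\hat g} = d\sigma_g$ and the common value of $H_0$,
\begin{align*}
m_{BY}(\Sigma_i, \hat g) = \int_{\Sigma_i}(H_0 - H_{\hat g})\,d\sigma_g = m_{BY}(\Sigma_i, g) - (n-1)\alpha\int_{\Sigma_i}|\nabla\varphi|\,d\sigma_g ,
\end{align*}
and since $\varphi$ is constant along $\Sigma_i$ its tangential gradient vanishes there, whence $|\nabla\varphi| = |\partial_\nu\varphi|$ and $\int_{\Sigma_i}|\nabla\varphi|\,d\sigma_g = ||\partial_\nu\varphi||_{L^1(\Sigma_i, g)}$, giving the stated identity. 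There is no serious obstacle in this argument; the only points demanding care are the bookkeeping of constants and signs in the conformal law for the mean curvature — in particular that the dimensional factor is $n-1 = \dim\Sigma_i$ and that $\partial_\nu f$ comes out positive — and the identification of $\alpha\varphi$ with the normalized eigenfunction used to define $\eta_g^i$; once these are pinned down, the conclusion follows by direct substitution.
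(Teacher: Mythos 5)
Your proof is correct and follows essentially the same route as the paper's: use $\varphi=0$ on $\Sigma_i$ to see the induced metric, $H_0$ and the area element are unchanged, apply the conformal transformation law of mean curvature (you write it with $e^{2f}$, the paper with $u^{4/(n-2)}$ — the same formula) to get $H_{\hat g}=H_g+(n-1)\alpha|\nabla\varphi|$, invoke (A2) together with $\alpha|\nabla\varphi|\ge\eta_g^i$ for positivity, and integrate. Your extra remarks (Hopf's lemma for the outward normal and identifying $\alpha\varphi$ with the normalized eigenfunction defining $\eta_g^i$) only make explicit what the paper leaves implicit.
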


\begin{proof}
When restricted on $\Sigma_i$,
$$u = 1,$$
since $\varphi = 0$ on $\Sigma_i$. Hence $(\Sigma, \hat g|_{\Sigma_i})$ is isometric to $(\Sigma_i, g|_{\Sigma_i})$, which can also be embedded in $\mathbb{R}^n$ as a convex surface with the same mean curvature, say $H_0^i$.

On the other hand, the mean curvature of $\Sigma_i$ with respect to $\hat g$ is given by 
\begin{align*}\label{ineq:mean_curvature}
H_{\hat g}^i = u^{- \frac{2}{n-2}} \left( H_g^i + \frac{2(n-1)}{n-2} \partial_\nu \log u \right) = H_g^i + (n-1)  \alpha |\nabla \varphi|_{\Sigma_i} \geq  H_g^i + (n-1)   \eta_g^i > 0,
\end{align*}
where $\nu$ is the outward normal of the boundary $\Sigma_i$ and we used the fact that
$$\alpha |\nabla \varphi|_{\Sigma_i} \geq \eta_g^i .$$
Thus, the Brown-York mass of $(\Sigma_i, \hat g)$ is given by
\begin{align*}
m_{BY}(\Sigma_i, \hat g) =& \int_{\Sigma_i} \left( H_0^i - H_{\hat g}^i \right) d\sigma_g\\
=& \int_{\Sigma_i} \left( H_0^i - H_g^i \right) d\sigma_g - (n-1) \alpha \int_{\Sigma_i} |\nabla \varphi| d\sigma_g \\
=& m_{BY} (\Sigma_i, g) - (n-1) \alpha ||\partial_\nu \varphi||_{L^1(\Sigma_i, g)}.
\end{align*}
\end{proof}

Now we give the proof of our main theorem.

\begin{proof}[Proof of Theorem A]
By Lemma \ref{lem:conformal_scalar_curvature}, we know that the conformal metric $\hat g$ has non-negative scalar curvature. Form $\eta_g^i$-regularity assumptions and Lemma \ref{lem:conformal_boundary}, all components can be isometrically embedded in $\mathbb{R}^n$ as convex hypersurfaces and the mean curvature is positive with respect to $\hat g$. Applying Theorem \ref{thm:Shi-Tam}, we conclude that
$$m_{BY} (\Sigma_i, \hat g) = m_{BY} (\Sigma_i, g) - (n-1) \alpha ||\partial_\nu \varphi||_{L^1(\Sigma_i, g)} \geq 0, \ \ \ i = 1, \cdots, m$$ That is,
$$||\partial_\nu \varphi||_{L^1(\Sigma_i, g)} \leq \frac{m_{BY} (\Sigma_i, g) }{(n-1) \alpha} = \frac{m_{BY} (\Sigma_i, g) }{n-1} \left( \max_M \left( \frac{\lambda_1}{n}\varphi^2 + |\nabla \varphi|^2\right)\right)^{\frac{1}{2}}, \ \ \ i= 1, \cdots, m.$$
 
If $(M,g)$ is isometric to the canonical upper hemisphere, its Brown-York mass is given by 
$$m_{BY}(\Sigma, g) = \int_{\mathbb{S}^{n-1}} (n-1) d\sigma_{g_{\mathbb{S}^{n-1}}} = (n-1) \omega_{n-1},$$
where $\omega_{n-1}$ is the volume of round sphere $\mathbb{S}^{n-1}$. Meanwhile,
$$\frac{\lambda_1}{n} \varphi^2 + |\nabla \varphi|^2$$ 
is a constant on $M$ and in particular,
$$|\nabla \varphi|_\Sigma = \left.\left(\frac{\lambda_1}{n} \varphi^2 + |\nabla \varphi|^2 \right) \right|_\Sigma$$ is a constant on $\Sigma$. Therefore,
$$\frac{m_{BY} (\Sigma, g) }{n-1} \left( \max_M \left( \frac{\lambda_1}{n}\varphi^2 + |\nabla \varphi|^2\right)\right)^{\frac{1}{2}} = \omega_{n-1}|\nabla \varphi|_\Sigma =  ||\partial_\nu \varphi||_{L^1(\Sigma, g)}.$$
\emph{i.e.} equality holds in this case.

Conversely, if there exists an $i_0$ such that 
$$ ||\partial_\nu \varphi||_{L^1(\Sigma_{i_0}, g)} = \frac{m_{BY} (\Sigma_{i_0}, g) }{n-1} \left( \max_M \left( \frac{\lambda_1}{n}\varphi^2 + |\nabla \varphi|^2\right)\right)^{\frac{1}{2}}$$
holds, then
$$m_{BY}(\Sigma_{i_0}, \hat g) = 0$$ 
and hence $(M, \hat g)$ is isometric to a connected compact domain of $\mathbb{R}^n$ by the rigidity of Brown-York mass. In particular, the scalar curvature $R_{\hat g} = 0$, which implies that $R_g = n(n-1)$, $\lambda_1(M,g) = n$ and $$\varphi^2 + |\nabla \varphi|^2 = \alpha^{-2}$$ is a constant on $M$ by checking the proof of Lemma \ref{lem:conformal_scalar_curvature}. 

Now we show that $(M, g)$ has to be isometric to the standard upper hemisphere $\mathbb{S}^n_+$, where the essential idea is an adapted version of the proof of \emph{obata theorem} (cf. Proposition 3.1 in \cite{L-P}).

For simplicity, we denote 
$$w:= 1 + \alpha\varphi \geq 1$$
and hence $\hat g = w^{-2} g$. From the conformal transformation law of Ricci tensor and scalar curvature, the traceless Ricci tensor is given by
$$E_{\hat g} = E_g + (n-2) w^{-1} \left( \nabla^2 w - \frac{1}{n} g \Delta_g w \right) = 0.$$
That is,
$$E_g = - (n-2) w^{-1} \left( \nabla^2 w - \frac{1}{n} g \Delta_g w \right).$$
Thus, we have
\begin{align*}
\int_M w |E_g|^2 dv_g =& - (n-2) \int_M \langle \nabla^2 w - \frac{1}{n} g \Delta_g w , E_g \rangle dv_g \\
=& (n-2) \int_M \langle \nabla w, div_g E_g \rangle dv_g - (n-2) \int_\Sigma E_g (\nabla w, \nu) d\sigma_g \\
=& - (n-2) \int_\Sigma E_g (\nabla w, \nu) d\sigma_g,
\end{align*}
where $$div_g E_g = \frac{n-2}{2n} dR_g = 0$$
by the \emph{contracted second Bianchi identity}.

On the other hand, 
\begin{align*}
E_g (\nabla w, \nu) =& - (n-2) w^{-1} \left( \nabla^2 w (\nabla w, \nu)  - \frac{1}{n} g(\nabla w, \nu)  \Delta_g w \right) \\
=& - (n-2)  \alpha^2\left( - |\nabla \varphi|_g^{-1} \nabla^2 \varphi (\nabla \varphi , \nabla \varphi)  + \frac{1}{n} |\nabla \varphi|_g  \Delta_g \varphi \right) \\
=&  (n-2) \alpha^2 \left( |\nabla \varphi|_g^{-1} \nabla^2 \varphi (\nabla \varphi , \nabla \varphi)  + \varphi |\nabla \varphi|_g \right) \\
=& - \frac{n-2}{2} \alpha^2 \nabla_\nu |\nabla \varphi|^2
\end{align*}
on $\Sigma$. Since $$\varphi^2 + |\nabla \varphi|^2  = \alpha^{-2}$$ on $M$, we have $$\frac{d}{dt} \left(\varphi^2 + |\nabla \varphi|^2 \right) (\gamma(t)) = 0,$$ where $\gamma: [0, 1) \rightarrow M$ is a geodesic starting from an arbitrary point $p = \gamma(0) \in \Sigma$ with $\gamma'(0) = - \nu(p)$. Thus,
$$ \nabla_\nu |\nabla \varphi|^2 (p) = -\left.\frac{d}{dt}\right|_{t = 0} |\nabla \varphi|^2  (\gamma(t))  =   \left.\frac{d}{dt}\right|_{t = 0}  \varphi (\gamma(t))^2 = 2 \varphi (p) |\nabla \varphi |(p) = 0 $$ since $\varphi(p) = 0$.

Therefore, 
$$\int_M w |E_g|^2 dv_g = \frac{(n-2)^2}{2} \alpha^2 \int_\Sigma \nabla_\nu |\nabla \varphi|^2 d\sigma_g =0$$
and hence $(M, g)$ is Einstein. Together with the fact that $(M, g)$ is conformally flat and scalar curvature $R_g = n(n-1)$, we conclude that $(M, g)$ is isometric to a connected compact domain in the round sphere $\mathbb{S}^n$. 

On the other hand, since the boundary $\Sigma$ can be embedded in $\mathbb{R}^n$ as a hypersurface, $\Sigma$ can be obtained by intersecting $\mathbb{S}^n$ with a hyperplane in $\mathbb{R}^{n+1}$, which shows that $\Sigma$ is isometric to an $(n-1)$-sphere $\mathbb{S}^{n-1}(r)$ with radius $r \leq 1$ and hence $(M,g)$ is isometric to a geodesic ball in $\mathbb{S}^n$. Now $(M,g)$ is isometric to the canonical upper hemisphere $\mathbb{S}^n_+$, since the first eigenvalue $\lambda_1(M,g) = 0$.
\end{proof}


\section{Applications of the main theorem}

If we assume comparison of Ricci curvature in Theorem A instead of scalar curvature, we would get a sharp inequality for the normal derivative for first eigenfunctions:

\begin{corollary}\label{cor:Dirichlet_eigenvalue_est_Ricci}
For $n\geq 3$, let $(M^n, g)$ be an $n$-dimensional compact Riemannian manifold with connected $\eta_g$-regular boundary $\Sigma$. Suppose its Ricci curvature satisfies that
\begin{align}
Ric_g \geq (n-1)g
\end{align}
and the first Dirichlet eigenvalue
$$\lambda_1(M,g) \leq n,$$ then we have inequality
\begin{align}
||\partial_\nu \varphi||_{L^1(\Sigma, g)} \leq \frac{m_{BY} (\Sigma, g) }{(n-1)} ||\partial_\nu \varphi||_{L^\infty(\Sigma, g)},
\end{align}
where equality holds if and only if $(M, g)$ is isometric to the canonical upper hemisphere $\mathbb{S}^n_+$.
\end{corollary}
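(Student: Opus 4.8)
### Proof proposal for Corollary 3.1

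The plan is to deduce this from Theorem A together with the Lichnerowicz--Obata--Reilly type lower bound on the eigenvalue. First I would observe that the hypothesis $\mathrm{Ric}_g \geq (n-1)g$ is strictly stronger than $R_g \geq n(n-1)$, so Theorem A applies verbatim and already yields
\[
\|\partial_\nu\varphi\|_{L^1(\Sigma,g)} \leq \frac{m_{BY}(\Sigma,g)}{n-1}\left(\max_M\left(\tfrac{\lambda_1}{n}\varphi^2 + |\nabla\varphi|^2\right)\right)^{\frac12}.
\]
The point of the corollary is that under the Ricci assumption the normalizing factor $\left(\max_M(\tfrac{\lambda_1}{n}\varphi^2 + |\nabla\varphi|^2)\right)^{1/2}$ can be replaced by the a priori smaller quantity $\|\partial_\nu\varphi\|_{L^\infty(\Sigma,g)} = \max_\Sigma|\nabla\varphi|$. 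So the heart of the matter is the claim that, when $\mathrm{Ric}_g\geq(n-1)g$ and $\lambda_1\leq n$, the function $P:=\tfrac{\lambda_1}{n}\varphi^2 + |\nabla\varphi|^2$ attains its maximum on the boundary $\Sigma$, where $P = |\nabla\varphi|^2$ since $\varphi$ vanishes there.

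The key step is therefore a Bochner/P-function argument. I would compute $\Delta_g P$ using the Bochner formula applied to $|\nabla\varphi|^2$ together with $\Delta_g\varphi = -\lambda_1\varphi$:
\[
\tfrac12\Delta_g|\nabla\varphi|^2 = |\nabla^2\varphi|^2 + \mathrm{Ric}_g(\nabla\varphi,\nabla\varphi) + \langle\nabla\varphi,\nabla\Delta_g\varphi\rangle = |\nabla^2\varphi|^2 + \mathrm{Ric}_g(\nabla\varphi,\nabla\varphi) - \lambda_1|\nabla\varphi|^2,
\]
and $\tfrac{\lambda_1}{2n}\Delta_g\varphi^2 = \tfrac{\lambda_1}{n}(|\nabla\varphi|^2 - \lambda_1\varphi^2)$. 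Using $|\nabla^2\varphi|^2 \geq \tfrac{1}{n}(\Delta_g\varphi)^2 = \tfrac{\lambda_1^2}{n}\varphi^2$ and $\mathrm{Ric}_g\geq(n-1)g$, one finds
\[
\tfrac12\Delta_g P \geq \tfrac{\lambda_1^2}{n}\varphi^2 + (n-1)|\nabla\varphi|^2 - \lambda_1|\nabla\varphi|^2 + \tfrac{\lambda_1}{n}|\nabla\varphi|^2 - \tfrac{\lambda_1^2}{n}\varphi^2 = \left(n - \lambda_1 + \tfrac{\lambda_1}{n} - (?)\right)\ldots
\]
so that after collecting terms the coefficient of $|\nabla\varphi|^2$ is $n - 1 - \lambda_1 + \tfrac{\lambda_1}{n} = (n-\lambda_1)\tfrac{n-1}{n}\geq 0$ precisely because $\lambda_1\leq n$. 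Hence $\Delta_g P \geq 0$, i.e. $P$ is subharmonic, and the maximum principle forces $\max_M P = \max_\Sigma P = \max_\Sigma|\nabla\varphi|^2 = \|\partial_\nu\varphi\|_{L^\infty(\Sigma,g)}^2$. Substituting this identification of the normalizing constant into the Theorem A inequality gives the stated estimate.

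For the equality case I would argue that equality in the displayed inequality forces equality in Theorem A, which by that theorem's rigidity statement already gives $(M,g)\cong\mathbb{S}^n_+$; conversely, on $\mathbb{S}^n_+$ one has $P\equiv 1$ and the computation in the proof of Theorem A shows equality holds. The main obstacle I anticipate is purely bookkeeping: making sure the Bochner computation is carried out cleanly so that the $\lambda_1\leq n$ hypothesis is exactly what is needed for subharmonicity of $P$, and checking that the $L^\infty$ norm on $\Sigma$ genuinely dominates — rather than merely bounds — the interior maximum, which is where connectedness of $\Sigma$ (so that there is a single boundary component carrying the max) is used implicitly. No new analytic input beyond the maximum principle and Theorem A should be required.
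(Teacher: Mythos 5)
Your proposal is correct and follows essentially the same route as the paper: the paper's Lemma on subharmonicity proves via Bochner's formula that $W=\frac{\lambda_1}{n}\varphi^2+|\nabla\varphi|^2$ is subharmonic when $\mathrm{Ric}_g\geq (n-1)g$ and $\lambda_1\leq n$, then uses the maximum principle to identify $\max_M W=\max_\Sigma|\nabla\varphi|^2=\|\partial_\nu\varphi\|_{L^\infty(\Sigma,g)}^2$ and plugs this into Theorem A, with the equality case inherited from Theorem A's rigidity. Your Bochner computation (modulo the garbled intermediate display, whose collected coefficient $\frac{(n-1)(n-\lambda_1)}{n}\geq 0$ is the correct one) and your use of the maximum principle coincide with the paper's argument.
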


\begin{remark}
It is well-known that if $Ric_g \geq (n-1)g$ and $H_g \geq 0$ on $\Sigma$, then the eigenvalue estimate $$\lambda_1(M,g) \geq n$$ holds (see \cite{Reilly}). Clearly, this estimate won't be true, if $H_g < 0$ on $\Sigma$ by checking geodesic balls in $\mathbb{S}^n$ with radius strictly greater than $\frac{\pi}{2}$. 
\end{remark}

Corollary \ref{cor:Dirichlet_eigenvalue_est_Ricci} can be deduced easily from Theorem A together with the following well-known fact:
\begin{lemma}\label{lem:Ric_subharmonic}
Suppose $Ric_g \geq (n-1) g$ and $\lambda_1 \leq n$, then $W:=\frac{\lambda_1}{n} \varphi^2 +  |\nabla \varphi|^2$ is a subharmonic function.
\end{lemma}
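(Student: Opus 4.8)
The plan is to run the classical Bochner argument underlying the Lichnerowicz--Obata theorem, adapted to the Dirichlet eigenfunction $\varphi$. First I would apply the Bochner--Weitzenb\"ock formula to $\varphi$, namely
$$\frac{1}{2}\Delta_g |\nabla\varphi|^2 = |\nabla^2\varphi|^2 + \langle \nabla\varphi, \nabla(\Delta_g\varphi)\rangle + Ric_g(\nabla\varphi, \nabla\varphi).$$
Using the eigenvalue equation $\Delta_g\varphi = -\lambda_1\varphi$ we get $\nabla(\Delta_g\varphi) = -\lambda_1\nabla\varphi$, so the middle term equals $-\lambda_1|\nabla\varphi|^2$; the hypothesis $Ric_g \geq (n-1)g$ gives $Ric_g(\nabla\varphi,\nabla\varphi)\ge (n-1)|\nabla\varphi|^2$; and the trace Cauchy--Schwarz (Newton) inequality $|\nabla^2\varphi|^2 \ge \frac{1}{n}(\Delta_g\varphi)^2 = \frac{\lambda_1^2}{n}\varphi^2$ bounds the Hessian term from below. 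Combining these,
$$\frac{1}{2}\Delta_g|\nabla\varphi|^2 \ge \frac{\lambda_1^2}{n}\varphi^2 + \bigl((n-1) - \lambda_1\bigr)|\nabla\varphi|^2.$$

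Next I would compute $\Delta_g(\varphi^2) = 2\varphi\,\Delta_g\varphi + 2|\nabla\varphi|^2 = -2\lambda_1\varphi^2 + 2|\nabla\varphi|^2$, hence $\frac{\lambda_1}{n}\cdot\frac12\Delta_g(\varphi^2) = -\frac{\lambda_1^2}{n}\varphi^2 + \frac{\lambda_1}{n}|\nabla\varphi|^2$. Adding this to the previous inequality, the $\varphi^2$-terms cancel exactly, leaving
$$\frac12\Delta_g W \ge \Bigl((n-1) - \lambda_1 + \frac{\lambda_1}{n}\Bigr)|\nabla\varphi|^2 = \frac{(n-1)(n-\lambda_1)}{n}\,|\nabla\varphi|^2.$$
Since $\lambda_1 \le n$ by hypothesis, the right-hand side is nonnegative, so $\Delta_g W \ge 0$, i.e. $W$ is subharmonic.

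There is no genuine obstacle here: the computation is purely local, and the only ingredients are the Bochner formula, the trace Cauchy--Schwarz inequality for the Hessian, and the two hypotheses on $Ric_g$ and $\lambda_1$. The one point worth highlighting is the exact cancellation of the $\varphi^2$-terms between the Bochner inequality for $|\nabla\varphi|^2$ and the contribution of $\frac{\lambda_1}{n}\Delta_g(\varphi^2)$; this is precisely why the weight $\frac{\lambda_1}{n}$ appears in the definition of $W$, matching the same combination used in the normalization $\max_M\bigl(\frac{\lambda_1}{n}\varphi^2 + |\nabla\varphi|^2\bigr) = 1$ throughout the paper.
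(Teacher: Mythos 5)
Your proof is correct and follows essentially the same route as the paper: Bochner's formula applied to $|\nabla\varphi|^2$, the trace inequality $|\nabla^2\varphi|^2 \geq \frac{1}{n}(\Delta_g\varphi)^2$, the Ricci lower bound, and the cancellation of the $\varphi^2$-terms, arriving at the same final bound $\frac{1}{2}\Delta_g W \geq (n-1)\bigl(1-\frac{\lambda_1}{n}\bigr)|\nabla\varphi|^2 \geq 0$. No gaps.
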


\begin{proof}
From \emph{Bochner's formula},
\begin{align*}
\frac{1}{2} \Delta_g W &= \frac{\lambda_1}{n} \varphi \Delta_g \varphi +\frac{\lambda_1}{n}  |\nabla \varphi|^2 + \frac{1}{2} \Delta_g |\nabla \varphi|^2 \\
&= - \frac{\lambda_1^2}{n} \varphi^2 + \frac{\lambda_1}{n} |\nabla \varphi|^2 + |\nabla^2 \varphi|^2 + \langle \nabla \varphi, \nabla \Delta_g \varphi \rangle + Ric_g (\nabla \varphi, \nabla \varphi) \\
&= - \frac{\lambda_1^2}{n} \varphi^2 -  \frac{(n - 1)\lambda_1}{n}|\nabla \varphi|^2 + \left|\nabla^2 \varphi - \frac{1}{n} g \Delta_g \varphi \right|^2  + \frac{1}{n} (\Delta_g \varphi)^2 + Ric_g (\nabla \varphi, \nabla \varphi) \\
&\geq - \frac{\lambda_1^2}{n} \varphi^2 -  \frac{(n - 1)\lambda_1}{n}|\nabla \varphi|^2  + \frac{1}{n} (\Delta_g \varphi)^2 + (n-1) |\nabla \varphi|^2\\
&=  (n - 1) \left( 1 - \frac{\lambda_1}{n}\right) |\nabla \varphi|^2 \\
& \geq 0,
\end{align*}
by the assumption that $\lambda_1 \leq n$.
\end{proof}

\begin{proof}[Proof of Corollary \ref{cor:Dirichlet_eigenvalue_est_Ricci}]
From Lemma \ref{lem:Ric_subharmonic}, $$W=\frac{\lambda_1}{n} \varphi^2 +  |\nabla \varphi|^2$$ is a subharmonic function on $M$. By \emph{maximum principle}, $$\max_M W = \max_\Sigma W = ||\partial_\nu \varphi||^2_{L^\infty(\Sigma, g)}.$$
Now applying Theorem A, we get
\begin{align*}
||\partial_\nu \varphi||_{L^1(\Sigma, g)} \leq \frac{m_{BY} (\Sigma, g) }{n-1} \left( \max_M  W\right)^{\frac{1}{2}} = \frac{m_{BY} (\Sigma, g) }{n-1} ||\partial_\nu \varphi||_{L^\infty(\Sigma, g)},
\end{align*}
where equality holds if and only if $(M,g)$ is isometric to the canonical upper hemisphere.
\end{proof}

By taking the contraposition of Theorem A, we are readily to deduce Corollary A:
\begin{proof}[Proof of Corollary A]
We normalize the eigenfunction $\varphi$ such that
$$\max_M \left(\frac{\lambda_1}{n} \varphi^2 +  |\nabla \varphi|^2 \right) = 1.$$
Then
$$||\partial_\nu \varphi||_{L^1(\Sigma_i, g)} \geq \left(\min_{\Sigma_i} |\nabla \varphi| \right) |\Sigma_i| = \eta^i_g |\Sigma_i|, i =1, \cdots, m.$$
Suppose $\lambda_1(M,g) < n$ and there exists an $i_0$ such that
 $$ m_{BY}(\Sigma_{i_0}, g) \leq (n-1)\eta^{i_0}_g |\Sigma_{i_0}|,$$
 then we have
 $$||\partial_\nu \varphi||_{L^1(\Sigma_{i_0}, g)} \geq \eta^{i_0}_g |\Sigma_{i_0}| \geq \frac{m_{BY}(\Sigma_{i_0}, g) }{n-1}.$$
 Now applying Theorem A, we deduce that $(M,g)$ is isometric to the canonical upper hemisphere $\mathbb{S}^n_+$ and hence $\lambda_1(M,g) = n$. This contradicts to the assumption that $\lambda_1(M,g) < n$ and hence we have $\lambda_1(M,g) \geq n$. The rigidity part follows from the corresponding one in Theorem A.
\end{proof}

As a very special case, Corollary A is readily to imply Corollary B. However, let us state and prove a slightly more general form instead.
\begin{corollary}\label{cor:Dirichlet_eigenvalue_est_integral_mean_curvature}
For $n\geq 3$, let $(M^n, g)$ be an $n$-dimensional Riemannian manifold with boundary $\Sigma := \partial M$. Suppose its scalar curvature satisfies 
\begin{align}
R_g \geq n(n-1)
\end{align}
and the boundary $\Sigma$ is isometric to the canonical $\mathbb{S}^{n-1}$ in $\mathbb{R}^n$. Moreover, assume its mean curvature satisfies 
\begin{align}
H_g > - (n-1) \eta_g
\end{align}
on $\Sigma$ and
\begin{align}
\int_{\Sigma} H_g d\sigma_g \geq (n-1) (1 - \eta_g) \omega_{n-1},
\end{align}
where $\omega_{n-1}$ is the volume of canonical sphere $\mathbb{S}^{n-1}$.
Then we have the eigenvalue estimate:
\begin{align}
\lambda_1 (M, g)\geq n,
\end{align}
where equality holds if and only if $(M, g)$ is isometric to the canonical upper hemisphere $\mathbb{S}^n_+$.
\end{corollary}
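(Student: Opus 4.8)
The plan is to deduce Corollary \ref{cor:Dirichlet_eigenvalue_est_integral_mean_curvature} from Corollary A by translating the integral mean curvature hypothesis into the Brown-York mass hypothesis $m_{BY}(\Sigma,g)\leq (n-1)\eta_g|\Sigma|$. Since $\Sigma$ is isometric to the canonical round sphere $\mathbb{S}^{n-1}$, it embeds in $\mathbb{R}^n$ (in fact uniquely up to rigid motion, by Cohn-Vossen rigidity) precisely as the standard unit sphere, whose mean curvature is constantly $H_0 = n-1$; hence $\Sigma$ is $\eta_g$-regular, assumption (A1) being automatic and (A2) being exactly the hypothesis $H_g > -(n-1)\eta_g$. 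Moreover, $|\Sigma| = \omega_{n-1}$ and
\begin{align*}
m_{BY}(\Sigma,g) = \int_\Sigma (H_0 - H_g)\,d\sigma_g = (n-1)\omega_{n-1} - \int_\Sigma H_g\,d\sigma_g.
\end{align*}
The hypothesis $\int_\Sigma H_g\,d\sigma_g \geq (n-1)(1-\eta_g)\omega_{n-1}$ then gives immediately
\begin{align*}
m_{BY}(\Sigma,g) \leq (n-1)\omega_{n-1} - (n-1)(1-\eta_g)\omega_{n-1} = (n-1)\eta_g\,\omega_{n-1} = (n-1)\eta_g|\Sigma|,
\end{align*}
which is exactly the inequality required in Corollary A (with the single boundary component playing the role of $\Sigma_{i_0}$).

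Next I would simply invoke Corollary A: with $R_g \geq n(n-1)$, with $\Sigma$ being $\eta_g$-regular, and with the Brown-York mass bound just established, Corollary A yields $\lambda_1(M,g) \geq n$, together with the rigidity statement that equality forces $(M,g)$ to be isometric to the canonical upper hemisphere $\mathbb{S}^n_+$. For the converse direction of the rigidity, one checks directly that $\mathbb{S}^n_+$ satisfies all the hypotheses: its scalar curvature is $n(n-1)$, its boundary is the equator $\mathbb{S}^{n-1}$ which is totally geodesic so $H_g = 0$, and $0 = \int_\Sigma H_g\,d\sigma_g$ equals $(n-1)(1-\eta_g)\omega_{n-1}$ because $\eta_g = 1$ on the hemisphere (as noted in the remark following the definition of $\eta_g$); and of course $\lambda_1 = n$ there.

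There is essentially no serious obstacle here — the corollary is a formal consequence of Corollary A once the dictionary between the two sets of boundary hypotheses is spelled out. The only point requiring a word of care is the claim that an isometric copy of the round $\mathbb{S}^{n-1}$ sits inside $\mathbb{R}^n$ with constant mean curvature $n-1$; this is the statement that the isometric embedding of the standard sphere is the standard one, which follows from rigidity of convex hypersurfaces (Cohn-Vossen when $n=3$, and the higher-dimensional analogue), and is implicitly the reason the quantity $H_0$ appearing in the Brown-York mass is well-defined and equal to $n-1$ in this situation. Everything else is bookkeeping: rewriting $m_{BY}$ in terms of $\int_\Sigma H_g$, substituting the hypothesis, and citing Corollary A.
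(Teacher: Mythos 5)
Your proposal is correct and follows essentially the same route as the paper: rewrite $m_{BY}(\Sigma,g)=(n-1)\omega_{n-1}-\int_\Sigma H_g\,d\sigma_g$ using $H_0=n-1$ for the round sphere, observe that the integral hypothesis gives exactly $m_{BY}(\Sigma,g)\leq (n-1)\eta_g|\Sigma|$, and invoke Corollary A (the paper's citation of ``Theorem B'' at this point is evidently a typo for that earlier result). Your extra remarks on the uniqueness of the embedding and on checking that $\mathbb{S}^n_+$ attains equality are sound but not needed beyond what the paper records.
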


\begin{proof}
The conclusion simply follows from Theorem B together with the fact that 
$$m_{BY}(\Sigma, g) = \int_\Sigma (H_0 - H_g) d\sigma_g = (n-1) \omega_{n-1} - \int_\Sigma  H_g d\sigma_g,$$ where $H_0 = n-1$ is the mean curvature of canonical sphere $\mathbb{S}^{n-1}$ in $\mathbb{R}^n$.
\end{proof}


\section{An area estimate for the event horizon}

In the end, we give the estimate of the area of the event horizon in a vacuum static space with positive cosmological constant.
\begin{proof}[Proof of Theorem B]
Since $(M, g, u)$ is a vacuum static space, then by definition, the lapse function $u$ is non-negative and vanishes only on $\Sigma$. Moreover, it satisfies the \emph{vacuum static equation}
\begin{align}\label{eqn:vacuum_static}
\nabla^2 u - g \Delta_g u - u Ric_g = 0.
\end{align}
By taking trace of the above equation, we get
$$\Delta_g u + \frac{R_g}{n-1} u = \Delta_g u + n u =0.$$
From the positivity of $u$ on the interior of $M$, we deduce that $u$ is a first eigenfunction with eigenvalue $\lambda_1(M,g) = n$.

In \cite{Shen}, Shen showed that the following \emph{Robinson-type identity}
\begin{align}\label{eqn:div}
div_g \left( u^{-1} \nabla(u^2 + |\nabla u|^2 )\right) = 2u|E_g|^2
\end{align}
holds on $M \backslash \Sigma$. Then applying \emph{maximum principle}, we see
$$\max_M ( u^2 +|\nabla u|^2 ) = \max_\Sigma ( u^2 +|\nabla u|^2 ) = \max_\Sigma |\nabla u|^2 .$$

On the other hand, $\nabla^2 u$ vanishes on $\Sigma$, it implies that $|\nabla u|$ is a constant on $\Sigma$ and $\Sigma$ itself is totally geodesic which in particular has vanishing mean curvature $H_g$. Hence by Theorem A, we have
\begin{align*}
||\partial_\nu u||_{L^1(\Sigma_i, g)} \leq \frac{m_{BY} (\Sigma_i, g) }{(n-1)} \left( \max_M \left( u^2 + |\nabla u|^2\right)\right)^{\frac{1}{2}} = \frac{\max_\Sigma |\nabla u|}{n-1}  \int_{\Sigma_i} \overline H_i d\sigma_g 
\end{align*}
for $i = 1, \cdots, m$, where $\overline H_i$ is the mean curvature of $\Sigma_i$ induced by the Euclidean metric. That is,
$$Area(\Sigma_i, g) \leq  \frac{\max_i \kappa_i}{(n-1)\kappa_i} \int_{\Sigma_i} \overline H_i d\sigma_g, \ \ \ i = 1, \cdots, m.$$

From Gauss equation in $\mathbb{R}^n$, the scalar curvature of $\Sigma_i$ satisfies that
$$R_{\Sigma_i} = \overline H^2_i - |\overline{A}_i|^2 = \frac{n-2}{n-1}\overline H^2_i - |\overset{\circ}{\overline A_i}|^2,$$
where $\overline A_i$ and $\overset{\circ}{\overline A_i}$ are second fundamental form and its traceless part respectively. Thus by H\"older's inequality, we get
\begin{align*}
Area(\Sigma_i, g) \leq  \frac{\max_i \kappa_i^2}{(n-1)^2 \kappa_i^2}\int_{\Sigma_i} \overline H_i^2 d\sigma_g = \frac{\max_i \kappa_i^2}{(n-1)(n-2) \kappa_i^2}\int_{\Sigma_i} \left( R_{\Sigma_i} + |\overset{\circ}{\overline A_i}|^2 \right) d\sigma_g,
\end{align*}
for $i = 1, \cdots, m.$

The rigidity follows from the corresponding part of Theorem A.
\end{proof}


\bibliographystyle{amsplain}

\begin{thebibliography}{10}

\bibitem{Ambrozio} Ambrozio, L.: \textit{On static three-manifolds with positive scalar curvature}, J. Diff. Geom. \textbf{107}: no.1, 1 - 45. (2017)

\bibitem{B-M} Bunting, G. and Masood-ul-Alam, A.K.M.: \textit{Nonexistence of multiple black holes in asymptotically euclidean static vacuum space-time}, Gen. Rel. Grav. \textbf{19}: 147 - 154. (1987)

\bibitem{H-M-R} Hizagi, O., Montiel, S. and Raulot, S.: \textit{Uniqueness of the de Sitter spacetime among static vacua with positive cosmological constant}, Ann. Glob. Anal. Geom. \textbf{47}: 167-178. (2014) 

\bibitem{Lohkamp_1} Lohkamp, J.: \textit{The higher dimensional positive mass theorem I},
arXiv:math/0608795v2. (2016)

\bibitem{Lohkamp_2} Lohkamp, J.: \textit{The higher dimensional positive mass theorem II}, arXiv:1612.07505. (2016)

\bibitem{L-P} Lee, J.-M. and Parker, T.: \textit{The Yamabe problem}, Bull. Amer. Math. Soc. \textbf{17}: 37 - 91. (1987) 

\bibitem{Qing} Qing, J.: \textit{On the uniqueness of the AdS space-time in higher dimensions}, Ann. Henri Poincar\'e \textbf{5}: 245 - 260. (2004) 

\bibitem{Q-Y_1} Qing, J. and Yuan, W.: \textit{A note on static spaces and related problems}, J. Geom. Phys. \textbf{74}: 18 - 27. (2013) 

\bibitem{Q-Y_2} Qing, J. and Yuan, W.: \textit{On scalar curvature rigidity of vacuum static spaces}, Math.
Ann. \textbf{365}: 1257 - 1277. (2016)


\bibitem{Reilly} Reilly, R.: \textit{Applications of the Hessian operator in a Riemannian manifold}, Indiana Univ. Math. J. \textbf{26}: 459 - 472. (1977)

\bibitem{S-Y_1} Schoen, R.-M. and Yau, S.-T.: \textit{On the proof of positive mass conjecture in General Relativity}, Comm. math. Phys. \textbf{65}: 45 - 76. (1979) 

\bibitem{S-Y_2} Schoen, R.-M. and Yau, S.-T.: \textit{Proof of positive mass theorem II}, Comm. Math. Phys. \textbf{79}: 231 - 260. (1981) 

\bibitem{S-Y_3} Schoen, R.-M. and Yau, S.-T.: \textit{Positive scalar curvature and minimal hypersurface singularities}, arXiv:1704.05490v1. (2017)

\bibitem{Shen} Shen, Y.: \textit{A note on Fischer-Marsden's conjecture}, Proc. Amer. Math. Soc. \textbf{125}: 901 - 905.  (1997) 

\bibitem{S-T} Shi, Y.-G. and Tam, L.-F.: \textit{Positive mass theorem and boundary behaviors of compact manifolds with nonnegative scalar curvature}, J. Diff. Geom. \textbf{62}: 79 - 125. (2002) 

\bibitem{Witten} Witten, E.: \textit{A new proof of the positive mass theorem}, Comm. Math. Phys. \textbf{80}: 381 - 402. (1981) 

\end{thebibliography}

\end{document}